\tikzstyle{edge} = [fill,opacity=.5,fill opacity=.5,line cap=round, line join=round, line width=50pt]
\theoremstyle{plain}
\theoremstyle{definition}
\newtheorem{theorem}{Theorem}
\newtheorem{proposition}[theorem]{Proposition}
\newtheorem{corollary}[theorem]{Corollary}
\DeclareMathAlphabet{\mathpzc}{OT1}{pzc}{m}{it}
\newcommand{\total}[1]{\|#1\|}
\newcommand{\omitt}[1]{}
\begin{document}

\title{Broken bricks and the pick-up sticks problem}

\author{T.~Kyle Petersen}
\address{Department of Mathematical Sciences, DePaul University, Chicago, IL, USA}
\email{t.kyle.petersen@depaul.edu}
\thanks{Research partially supported by Simons Foundation Collaboration Grant for Mathematicians 353772}%

\author{Bridget Eileen Tenner}
\address{Department of Mathematical Sciences, DePaul University, Chicago, IL, USA}
\email{bridget@math.depaul.edu}
\thanks{Research partially supported by Simons Foundation Collaboration Grant for Mathematicians 277603.}

\keywords{}

\subjclass[2010]{60C05}

\begin{abstract}
We generalize the well-known broken stick problem in several ways, including a discrete ``brick'' analogue and a sequential ``pick-up sticks/bricks'' version. The limit behavior of the broken brick problem gives a combinatorial proof of the broken stick problem. The pick-up version gives a variation on those scenarios, and we conclude by showing a greater context---namely, that the broken stick/brick problem and the pick-up sticks/bricks problem are two extremes in a family of interesting, and largely open, questions.
\end{abstract}

\maketitle

There is a classical probability exercise about forming a triangle from pieces of a stick.

\begin{quote}
\textbf{The broken stick problem -- classical version.} Consider a stick of fixed length. Pick two distinct interior points on the stick, independently and at random, and cut the stick at these two points. What is the probability that the resulting three pieces form a triangle?
\end{quote}

\noindent For example, if the stick has length $1$, then breaking the stick into segments of lengths $1/10$, $3/7$, and $1 - 1/10 - 3/7 = 33/70$ will produce a triangle, whereas breaking it into segments of lengths $1/10$, $3/8$, and $1 - 1/10 - 3/8 = 21/40$ will not produce a triangle, as shown in Figure~\ref{fig:broken stick examples}. 

\begin{figure}[htbp]
\begin{tikzpicture}[scale=10]
\draw[very thick] (0,0) -- (.471,0) -- (.052,.086) -- (0,0);
\end{tikzpicture}
\hspace{1in}
\begin{tikzpicture}[scale=10]
\draw[very thick] (0,0) -- (.525,0);
\draw[very thick] (0,0) -- ++(.098,.0196);
\draw[very thick] (.525,0) --++(-.374,.0196);
\end{tikzpicture}
\caption{Two breakings of a stick into three pieces, one of which can form a triangle and one of which cannot.}
\label{fig:broken stick examples}
\end{figure}
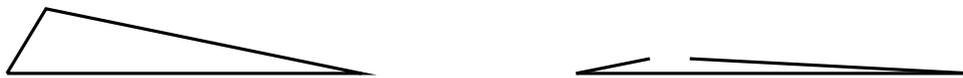

The classical broken stick problem can be answered by a nice argument in geometric probability, showing that we produce a triangle with probability $1/4$.

This problem generalizes naturally to arbitrary polygons, as follows.

\begin{quote}
\textbf{The broken stick problem -- general version.} Consider a stick of fixed length and a positive integer $k \ge 3$. Pick $k-1$ distinct interior points on the stick, independently and at random, and cut the stick at these $k-1$ points. What is the probability that the resulting $k$ pieces form a $k$-gon?
\end{quote}

The scenario of the broken stick problem has applications to a number of other fields, as discussed in \cite{uiuc report}. Another application is that, due to Proposition~\ref{prop:k-ilateral inequality} below, the general broken stick problem is related to a $k$-candidate plurality election in which no candidate wins a majority of the votes. Other generalizations and related discussions have appeared in \cite{crowdmath, gardner, ionascu pajitura, lemoine, poincare}.

The generalized broken stick problem has an elegant answer, as shown by D'Andrea and G\'omez.

\begin{theorem}[{cf.~\cite[Thm.~3]{dandrea gomez}}]\label{thm:splintered breaking}
Take a stick of fixed length and a positive integer $k \ge 3$. Pick $k-1$ distinct interior points on the stick, independently and at random, and cut the stick at these $k-1$ points. The probability that the resulting $k$ pieces form a $k$-gon is
$$1 - \frac{k}{2^{k-1}}.$$
\end{theorem}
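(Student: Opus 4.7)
The plan is to reduce the $k$-gon condition to a statement about the maximum piece length, and then compute the complementary probability via a symmetry/union-bound argument on $k$ events that turn out to be essentially disjoint.

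First I would invoke Proposition~\ref{prop:k-ilateral inequality}: lengths $\ell_1,\dots,\ell_k$ summing to the total length $L$ form a $k$-gon if and only if each $\ell_i$ is less than the sum of the others, equivalently $\ell_i < L/2$ for every $i$. Rescaling to $L=1$, the event ``the pieces form a $k$-gon'' equals $\{\max_i \ell_i < 1/2\}$, so its complement is $E = \bigcup_{i=1}^k E_i$, where $E_i = \{\ell_i \geq 1/2\}$. Since the total length is $1$, at most one piece can have length $\geq 1/2$ strictly, and the locus where two pieces both equal $1/2$ has measure zero in the space of cut configurations, so $P(E) = \sum_{i=1}^k P(E_i)$.

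Next I would compute $P(E_i)$. Let $X_1,\dots,X_{k-1}$ be the i.i.d.\ uniform cut points on $[0,1]$, giving the ordered pieces of lengths $\ell_1,\dots,\ell_k$. For the leftmost piece, $\ell_1 \geq 1/2$ requires every $X_j \geq 1/2$, which happens with probability $(1/2)^{k-1}$; for the rightmost piece, $\ell_k \geq 1/2$ requires every $X_j \leq 1/2$, same probability. For a middle piece $\ell_i \geq 1/2$, one can either cite the standard fact that uniform spacings are exchangeable (so every $\ell_i$ has the same distribution, hence $P(\ell_i \geq t) = (1-t)^{k-1}$ for all $i$ and all $t\in[0,1]$), or argue directly: condition on which pair of cut points bounds the $i$th piece and compute the volume of the remaining cuts lying in an interval of length at most $1/2$. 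Either route gives $P(E_i) = (1/2)^{k-1}$.

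Combining, $P(E) = k\cdot(1/2)^{k-1} = k/2^{k-1}$, and therefore the $k$-gon probability is $1 - k/2^{k-1}$, as claimed.

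The main obstacle is the exchangeability step: verifying that $P(\ell_i\geq 1/2)=(1/2)^{k-1}$ for interior pieces, not just the two boundary pieces. The cleanest way to bypass any appeal to external facts is the ``bar construction'' bijection---place $k-1$ cuts in $[0,1]$ together with the endpoints $0$ and $1$, and observe that the map sending the $i$th spacing to the initial spacing (by cyclic rotation on a circle of circumference $1$, then cutting at a uniform independent location) is measure preserving---giving the equality of distributions directly and making the union-bound calculation fully self-contained.
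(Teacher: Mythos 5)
Your proposal is correct, and its overall skeleton matches the paper's: both reduce to the condition $\max_i \ell_i < 1/2$ via Proposition~\ref{prop:k-ilateral inequality}, observe that the $k$ bad events are disjoint (for $k\ge 3$ two pieces of length $\ge 1/2$ would force the rest to have length zero), and conclude with $k$ times the probability of a single bad event. Where you diverge is in how that single probability $1/2^{k-1}$ is computed. The paper takes the simplex $\{(x_1,\dots,x_k): x_i>0,\ \sum x_i=1\}$ as the sample space and exhibits the region $\{x_k\ge 1/2\}$ as the image of an explicit affine contraction of the whole simplex that scales $k-1$ independent directions by $1/2$, so its volume is $1/2^{k-1}$ of the total; the symmetry among coordinates is then manifest from the symmetry of the simplex. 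You instead work directly with the i.i.d.\ cut points on $[0,1]$, where the end pieces give $(1/2)^{k-1}$ immediately but the interior pieces require the exchangeability of uniform spacings (or your cyclic-rotation argument) to transfer that computation. Both are sound; the paper's route avoids the exchangeability lemma entirely because the uniform measure on the simplex is permutation-symmetric by inspection, while your route is more elementary in that it never leaves the cube of cut points and makes the two ``easy'' cases do all the work. Your rotation argument should be stated a bit more carefully if written out in full (the standard version places $k$ i.i.d.\ uniform points on a circle of circumference $1$ and cuts at one of them, so that the $k$ spacings are cyclically exchangeable), but it does close the gap you identify.
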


The theorem can be proved geometrically, and we present that argument here as motivation. Suppose, without loss of generality, that the stick has unit length and consider, as the sample space, the interior of the unit simplex
\[
 \{ (x_1,\ldots,x_k) \in (0,1)^k : \sum x_i = 1 \}.
\]
We call this the ``sample space'' because we interpret a point $(x_1,\ldots,x_k)$ in this space as describing the stick having been cut at the points 
\[
0< x_1 < x_1+x_2<\cdots < x_1+x_2+\cdots+x_{k-1} < 1,
\]
to create segments of lengths $x_1$, $x_2$, \ldots, $x_{k-1}$, and $x_k = 1-(x_1 + \cdots + x_{k-1})$. It transpires (see Proposition \ref{prop:k-ilateral inequality} below) that the multiset
\[
 \{ x_1, x_2, \ldots, x_k \}
\]
of these lengths describes the side lengths of a $k$-gon, necessarily of unit perimeter, if and only if 
\begin{equation}\label{ineq:length at most half}
 x_i < \frac{1}{2} \quad \mbox{for all $i$.}
\end{equation}

\noindent The $x_i$ are positive and sum to $1$, so Inequality~\eqref{ineq:length at most half} can fail for at most one coordinate at a time. Saying, for example, that $x_k \geq 1/2$, is equivalent to requiring that the remaining coordinates satisfy the inequality  
\[
 x_1+x_2+\cdots+x_{k-1} \leq 1/2.
\]
This defines a subset of the sample space, which can be described as a contraction of the full simplex toward the corner $(0,\ldots,0,1)$:
$$(y_1,\ldots,y_k) \mapsto \left(\frac{y_1}{2}, \frac{y_2}{2}, \cdots, \frac{y_{k-1}}{2}, \frac{y_1+\cdots+y_{k-1}}{2}+y_k\right).$$
As such, the volume of this subset is $1/2^{k-1}$ of the volume of the full simplex. This argument holds for any of the $k$ coordinates failing Inequality~\eqref{ineq:length at most half}. Hence, the proportion of the sample space that has some coordinate failing that inequality is $k/2^{k-1}$, and so the desired probability is, indeed, $1 - k/2^{k-1}$. Figure \ref{fig:geom34} depicts the cases $k=3$ and $k=4$.

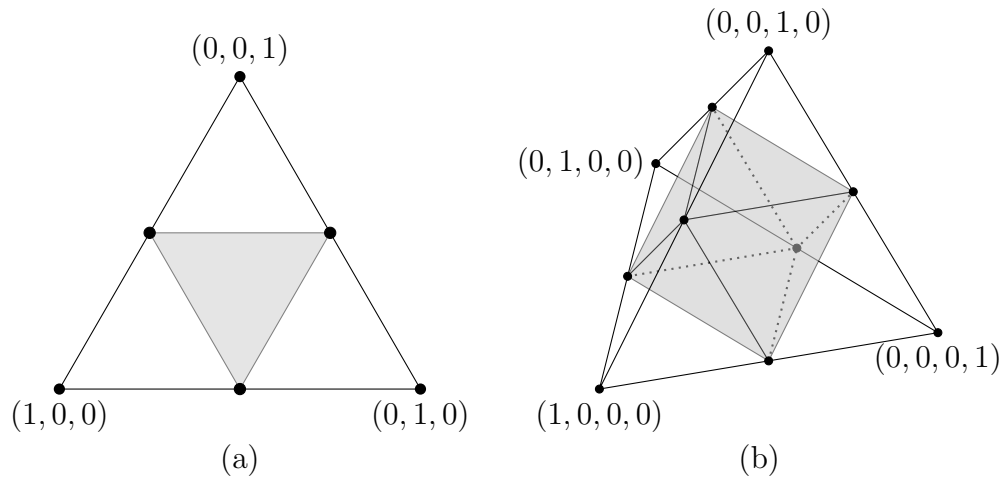
\begin{figure}
\begin{tabular}{c c}
\begin{tikzpicture}[cm={1,0,.5,.8660254,(0,0)},baseline=0,scale=.8]
\coordinate (a) at (0,0);
\coordinate (b) at (6,0);
\coordinate (c) at (0,6);
\coordinate (ac) at (0,3);
\coordinate (ab) at (3,0);
\coordinate (bc) at (3,3);
\draw (a)--(b)--(c)--(a);
\draw[fill=black] (a) node[fill=black,circle,inner sep=1.5] {} node[below] {$(1,0,0)$};
\draw[fill=black] (b) node[fill=black,circle,inner sep=1.5] {} node[below] {$(0,1,0)$};
\draw[fill=black] (c) node[fill=black,circle,inner sep=1.5] {} node[above] {$(0,0,1)$};
\draw[draw=black,fill=white!80!black,opacity=.5] (ab)--(bc)--(ac)--(ab);
\draw (ab) node[draw=black,fill=black,circle,inner sep=1.5] {};
\draw (ac) node[draw=black,fill=black,circle,inner sep=1.5] {};
\draw (bc) node[draw=black,fill=black,circle,inner sep=1.5] {};
\end{tikzpicture}
&
\begin{tikzpicture}[scale=.75,baseline=0]
\coordinate (a) at (0,0);
\coordinate (b) at (1,4);
\coordinate (c) at (3,6);
\coordinate (d) at (6,1);
\coordinate (ab) at (0.5,2);
\coordinate (ac) at (1.5,3);
\coordinate (ad) at (3,0.5);
\coordinate (bc) at (2,5);
\coordinate (bd) at (3.5,2.5);
\coordinate (cd) at (4.5,3.5);
\draw[fill=black] (a) circle (2pt) node[below] {$(1,0,0,0)$};
\draw[fill=black] (b) circle (2pt) node[left] {$(0,1,0,0)$};
\draw[fill=black] (c) circle (2pt) node[above] {$(0,0,1,0)$};
\draw[fill=black] (d) circle (2pt) node[below] {$(0,0,0,1)$};
\draw (b)--(d);
\draw[draw=none,fill=white!80!black,opacity=.2] (ab)--(bc)--(bd)--(ab);
\draw[draw=none,fill=white!80!black,opacity=.2] (ab)--(ad)--(bd)--(ab);
\draw[draw=none,fill=white!80!black,opacity=.2] (bd)--(bc)--(cd)--(bd);
\draw[draw=none,fill=white!80!black,opacity=.2] (bd)--(cd)--(ad)--(bd);
\draw[thick,dotted] (ab) -- (bd) -- (cd);
\draw[thick,dotted] (ad) -- (bd) -- (bc);
\draw[fill=black] (bd) circle (2pt);
\draw[draw=black,fill=white!80!black,opacity=.5] (ab)--(ac)--(bc)--(ab);
\draw[draw=black,fill=white!80!black,opacity=.5] (ab)--(ac)--(ad)--(ab);
\draw[draw=black,fill=white!80!black,opacity=.5] (ac)--(bc)--(cd)--(ac);
\draw[draw=black,fill=white!80!black,opacity=.5] (ad)--(ac)--(cd)--(ad);
\draw (a)--(b)--(c)--(d)--(a)--(c);
\draw[fill=black] (ab) circle (2pt);
\draw[fill=black] (ac) circle (2pt);
\draw[fill=black] (ad) circle (2pt);
\draw[fill=black] (bc) circle (2pt);
\draw[fill=black] (cd) circle (2pt);
\end{tikzpicture}\\
(a) & (b)
\end{tabular}
\caption{The geometric argument for $k=3$ and $k=4$. In (a), we see the probability of making a triangle is $1/4$ of the area of the sample space. In (b), we see the probability of making a quadrilateral is $1/2$ of the volume of the sample space.}\label{fig:geom34}
\end{figure}

The geometric probability argument for the broken stick problem is beautiful, but for two authors who spend most of their time counting things, a discrete version of the problem has great appeal. Thus we consider an analogue of the problem in which the stick has integer length and can only be broken at integer increments. Being sore-footed parents of young children, we think of this discrete version as a ``(LEGO) brick analogue.''

\begin{quote}
\textbf{The broken brick problem.} Let $n \ge k \ge 3$ be positive integers, and consider a stick of length $n$. Pick $k-1$ distinct interior integer points on the stick, independently and at random, and cut the stick at these $k-1$ points. What is the probability that the resulting $k$ pieces form a $k$-gon?
\end{quote}

The nice thing about this version of the problem is not only that it enables a combinatorial proof of D'Andrea and G\'omez's result, but also that it allows students and even small children to experiment with the question. For example with a stick of $n=10$ bricks, there are only $36$ ways to break the stick into $k = 3$ pieces, and the experimenter can record how many of these breakings result in a triangle. See Figure \ref{fig:kids}

\begin{figure}
\includegraphics[height=2.5in]{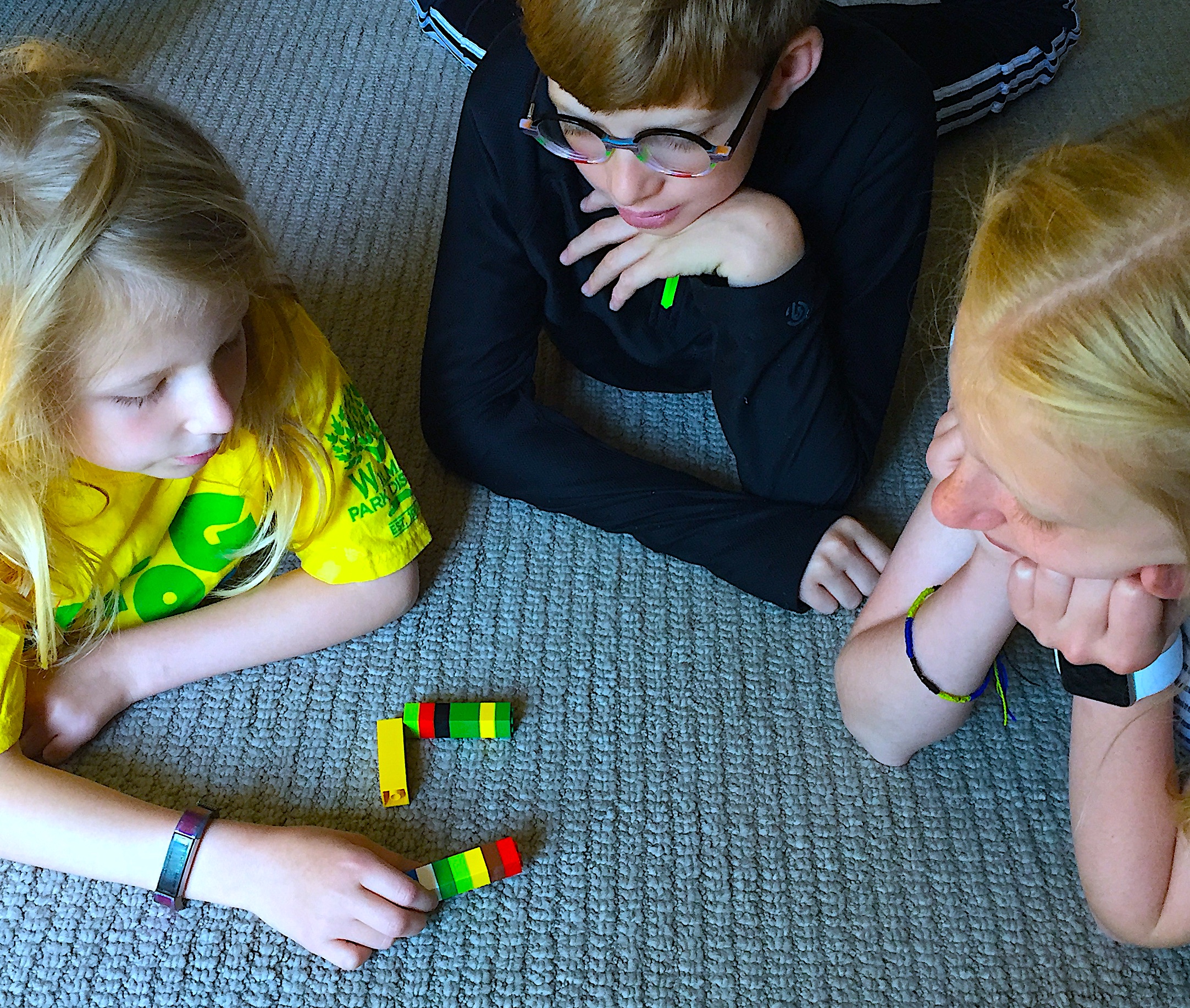} \hspace{.25in} \includegraphics[height=2.5in]{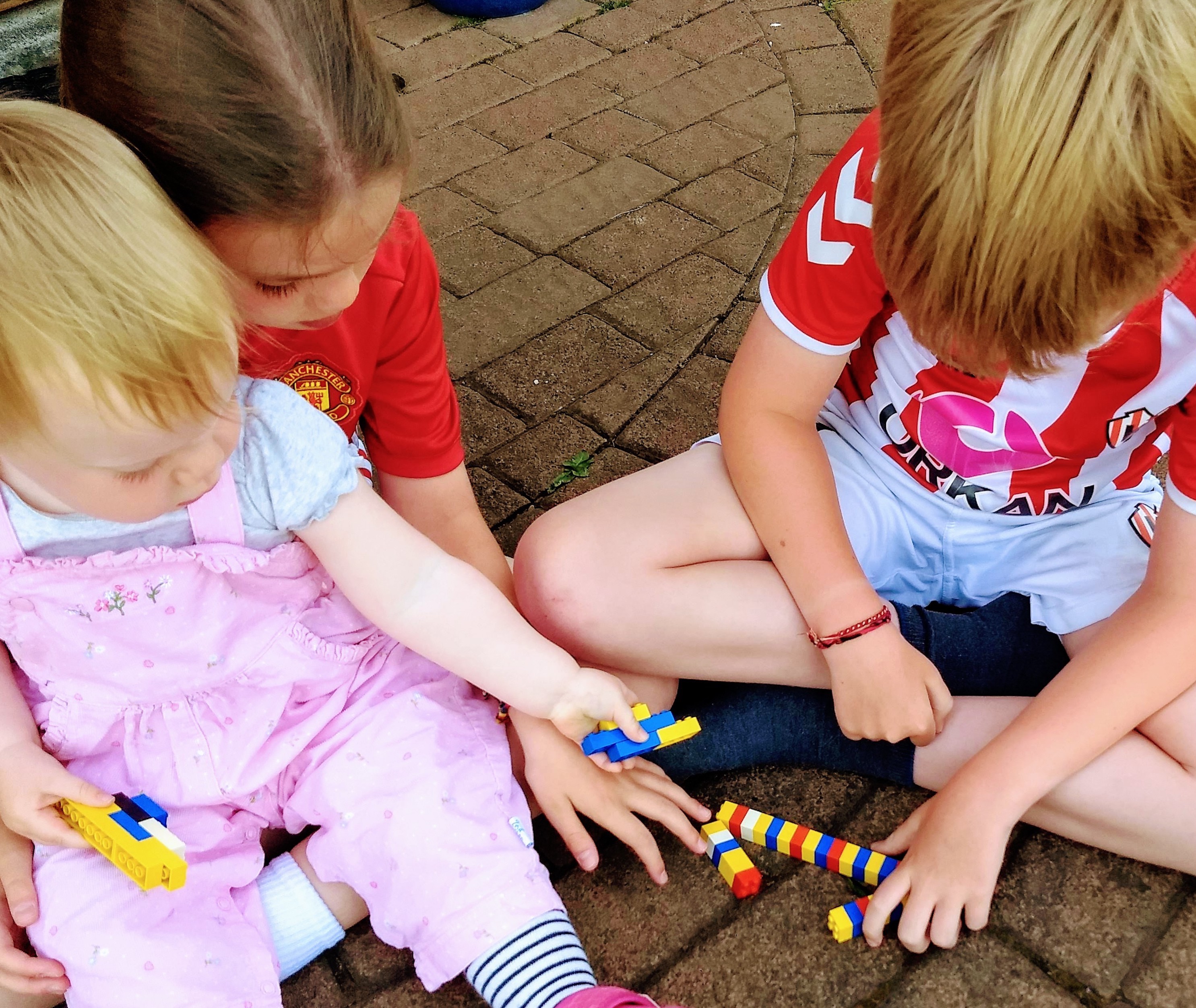}
\caption{Experimenting with sticks of LEGO bricks.}\label{fig:kids}
\end{figure}

\section{Polygonal inequalities}

The classical broken stick problem is a reference to the triangle inequality: a multiset $S$ of three positive numbers gives the side lengths of a triangle if and only if each (potential) side length is less than the sum of the other two (potential) side lengths. Thus the first step in solving generalizations of the broken stick problem is to find a $k$-gon analogue to the triangle inequality. More precisely, given a $k$-element multiset $S$ of positive numbers (later we will require that they be integers), is there a $k$-gon whose side lengths are the elements of $S$? What properties of $S$ must hold for such a $k$-gon to exist? The following result appears in \cite{dandrea gomez}, but we include a proof here in order to make their ``tweaking'' explicit. In what follows, we write $\total{S}$ to denote the sum of the elements of $S$.

\begin{proposition}[{cf.~\cite[Prop.~1]{dandrea gomez}}]\label{prop:k-ilateral inequality}
Fix a positive integer $k \ge 3$ and a $k$-element multiset $S$ of positive numbers. There exists a (convex) polygon whose side lengths are the elements of $S$ if and only if $x < \total{S} - x$, or equivalently,
\begin{equation}\label{eqn:side sum inequalities}
x < \frac{\total{S}}{2}
\end{equation}
for each $x \in S$.
\end{proposition}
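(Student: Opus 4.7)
I would prove the two directions separately.

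Necessity ($\Rightarrow$) is a direct geometric observation. If a convex $k$-gon with side-length multiset $S$ exists, fix any side of length $x$; its two endpoints are joined both by this side itself and by the polygonal path formed by the remaining $k-1$ sides, which has length $\total{S} - x$. Since the straight segment between two points is strictly shorter than any other path joining them, $x < \total{S} - x$.

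For sufficiency ($\Leftarrow$), I would induct on $k$. The base case $k = 3$ is the classical triangle inequality. For $k \geq 4$, order the elements as $s_1 \leq s_2 \leq \cdots \leq s_k$. The idea is to tweak $S$ by replacing the two smallest entries $s_1, s_2$ with a single positive real $c$ chosen so that (i) there is a triangle $T$ with sides $s_1, s_2, c$, and (ii) the $(k-1)$-element multiset $S' = (S \setminus \{s_1, s_2\}) \cup \{c\}$ still satisfies Inequality~\eqref{eqn:side sum inequalities}. Given such $c$, the inductive hypothesis yields a convex $(k-1)$-gon $P'$ realizing $S'$; gluing $T$ externally to $P'$ along the common side of length $c$ then produces a polygon with side-length multiset $S$.

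The central calculation is to show that a valid $c$ exists. Combining (i) and (ii) translates into the condition
\begin{equation*}
\max\bigl\{s_2 - s_1,\ \max_{j \geq 3}\bigl(s_1 + s_2 + 2s_j - \total{S}\bigr)\bigr\}\ <\ c\ <\ \min\bigl\{s_1 + s_2,\ \total{S} - s_1 - s_2\bigr\}.
\end{equation*}
Each comparison between one of the lower bounds and one of the upper bounds reduces either to the hypothesis $s_i < \total{S}/2$ or to the positivity of a sum of remaining entries of $S$ (which uses $k \geq 4$ and the fact that $s_1, s_2$ are the two smallest elements), so the interval is non-empty.

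The main obstacle is ensuring that the resulting $k$-gon is convex. Gluing $T$ externally to $P'$ along the side of length $c$ replaces the two interior angles of $P'$ at the endpoints of that side by those angles augmented by the two base angles of $T$. I would handle this by observing that the admissible range for $c$ contains values arbitrarily close to $s_1 + s_2$. For such a nearly degenerate $T$, the two base angles are arbitrarily small, so they do not push the adjacent interior angles of $P'$ past $\pi$, while the apex angle of $T$ at the new vertex is close to, but still less than, $\pi$. Thus $c$ can be chosen simultaneously to make $S'$ satisfy the polygonal inequality and to make $T$ flat enough to preserve convexity, completing the induction.
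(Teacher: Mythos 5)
Your proof is correct, and it is worth comparing the two directions separately. For sufficiency your argument is essentially the paper's: both induct on $k$ by splitting off two designated elements, seeking a length $c$ (the paper calls it $d$) so that those two elements together with $c$ form a triangle while the remaining elements together with $c$ satisfy Inequality~\eqref{eqn:side sum inequalities}, and both verify that the admissible interval for $c$ is nonempty by checking each lower bound against each upper bound, with every comparison reducing to the hypothesis $x < \total{S}/2$ or to positivity of the remaining entries. (The paper splits off a $(k-1)$-gon and only needs to check the largest of the remaining entries; you list the condition for every $s_j$ with $j\ge 3$, which amounts to the same thing. Your remark that one comparison uses ``$s_1,s_2$ are the two smallest'' is not actually needed---it only needs $k\ge 4$ and positivity.) For necessity, however, you take a genuinely different and more economical route: the one-line observation that a side is the straight segment between its endpoints while the other $k-1$ sides form a longer path between the same endpoints. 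The paper instead proves necessity by induction, cutting the polygon along a diagonal of length $d$ into two smaller polygons and combining the two inherited inequalities to eliminate $d$; your argument avoids the induction entirely and does not require choosing a diagonal that lies inside the polygon. Finally, you attempt to justify convexity of the glued polygon, which the paper does not do in its proof (it only asserts afterward that the construction ``can be used to produce'' convex $k$-gons); be aware, though, that your argument has a subtle dependence issue: as $c\to s_1+s_2$ the base angles of the triangle shrink, but the convex $(k-1)$-gon $P'$ supplied by the inductive hypothesis also changes with $c$, and nothing in the induction controls its interior angles at the side of length $c$, so keeping them bounded away from $\pi$ requires an additional argument (for instance, strengthening the induction to produce cyclic polygons).
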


\begin{proof}
We proceed by induction on $k$, noting that the case $k=3$ is precisely the triangle inequality. Assume, inductively, that the result holds for all $j$-gons, with $3 \le j < k$.

Suppose that a $k$-element multiset $S$ contains the side lengths of a $k$-gon $P$. Any diagonal of $P$ will separate the region into two polygons $Q_1$ and $Q_2$, each with fewer than $k$ sides, as shown in Figure~\ref{fig:drawing diagonal to prove inequality}. 
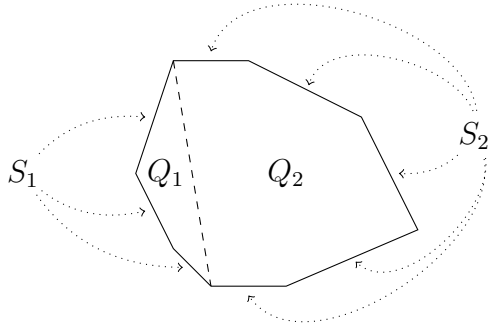
\begin{figure}[htbp]
\begin{tikzpicture}[scale=.5]
\draw (0,-1) -- (-1,0) -- (-2,2) -- (-1,5) -- (1,5) -- (4,3.5) -- (5.5,.5) -- (2,-1) -- (0,-1);
\draw[dashed] (-1,5) -- (0,-1);
\draw (-1.2,2) node {$Q_1$};
\draw (2,2) node {$Q_2$};
\path[dotted,->] (-5,2) edge[bend right] (-.75,-.5);
\path[dotted,->] (-5,2) edge[bend left] (-1.75,3.5);
\path[dotted,->] (-5,2) edge[bend right] (-1.75,1);
\fill[white] (-5,2) circle (.6);
\draw (-5,2) node {$S_1$};
\path[dotted,->] (7,3) edge[bend right,out=-90,in=-90] (0,5.25);
\path[dotted,->] (7,3) edge[bend right,out=-90,in=-90] (2.6,4.35);
\path[dotted,->] (7,3) edge[bend left] (5,2);
\path[dotted,->] (7,3) edge[bend left,out=90,in=90] (3.85,-.35);
\path[dotted,->] (7,3) edge[bend left,out=90,in=90] (1,-1.25);
\fill[white] (7,3) circle (.6);
\draw (7,3) node {$S_2$};
\end{tikzpicture}
\caption{Decomposing a $k$-gon by drawing a diagonal.}
\label{fig:drawing diagonal to prove inequality}
\end{figure}
Thus each $Q_i$ is subject to the inductive hypothesis, producing collections of inequalities as in Inequality~\eqref{eqn:side sum inequalities}. Let $d$ be the length of the diagonal separating $Q_1$ from $Q_2$, and decompose $S = S_1 \cup S_2$ so that the side lengths of $Q_1$ are $T_1 = S_1 \cup \{d\}$, and the side lengths of $Q_2$ are $T_2=S_2 \cup \{d\}$, as shown in Figure~\ref{fig:drawing diagonal to prove inequality}. Then by the inductive hypothesis,
$$x < \frac{\total{T_i}}{2}$$
for each $x \in T_i$ and $i\in \{1,2\}$. 

Let us see if the same holds with respect to $S$. Let $x \in S$. Then in particular, $x \in T_1 \cup T_2$ and $x \neq d$. Without loss of generality, suppose $x \in T_1$. First of all, 
$$x < \frac{\total{T_1}}{2} = \frac{\total{S_1}+d}{2}.$$
Likewise,
$$d < \frac{\total{T_2}}{2} = \frac{\total{S_2}+d}{2}.$$
Therefore $d/2 \le \total{S_2}/2$, and so
$$x < \frac{\total{T_1}}{2} = \frac{\total{S_1}+d}{2} < \frac{\total{S_1}+\total{S_2}}{2} = \frac{\total{S}}{2},$$
as desired.

Now suppose that $S = \{s_1,\ldots, s_k\}$ is a $k$-element multiset and that
\begin{equation}\label{eqn:inequality in proof}
x < \frac{\total{S}}{2}
\end{equation}
for all $x \in S$. We want to construct a $k$-gon whose side lengths are the elements of $S$. To do this, our goal is to find a value $d$ such that there is a triangle with sides of length $\{s_1,s_2,d\}$ and a $(k-1)$-gon with sides of length $\{s_3,s_4,\ldots, s_k,d\}$. Without loss of generality, suppose that $s_1 \ge s_2$, and that $s_k \ge s_i$ for all $i \in [3,k-1]$. Then, by the inductive hypothesis, this is equivalent to finding $d$ such that
$$s_1 - s_2 < d < s_1 + s_2$$
and
$$s_k - \big(s_3 + \cdots + s_{k-1}\big) < d < s_3 + \cdots + s_{k-1} + s_k.$$
The only way to have no such $d$ would be for these intervals to be disjoint, meaning that either 
$$s_1 + s_2 \le s_k - \big(s_3 + \cdots + s_{k-1}\big)$$
or
$$s_3 + \cdots + s_{k-1} + s_k \le s_1 - s_2.$$
However, the first of these would contradict Inequality~\eqref{eqn:inequality in proof} for $x = s_k$, and the second would contradict it for $x = s_1$. Thus the intervals must indeed overlap and so there must be such a length $d$. 

Then, by the inductive hypothesis, there is a triangle with sides of length $\{s_1,s_2,d\}$ and a $(k-1)$-gon with sides of length $\{s_3,s_4,\ldots,s_k,d\}$. Gluing these along their sides of length $d$ produces a $k$-gon with side lengths $\{s_1, \ldots, s_k\} = S$, as shown in Figure~\ref{fig:gluing together polygons}.
\begin{figure}[htbp]
\begin{tikzpicture}[scale=.5]
\draw (0,-1) -- (-1,0) -- (-2,2) -- (-1,5) -- (1,5) -- (4,3.5) -- (5.5,.5) -- (2,-1) -- (0,-1);
\draw[dashed] (-1,0) -- (2,-1);
\draw (.5,-.5) node[above right] {$d$};
\draw (1,-1) node[below] {$s_1$};
\draw (-.5,-.5) node[left] {$s_2$};
\draw (-1.5,1) node[left] {$s_3$};
\draw (-1.5,3.5) node[left] {$s_4$};
\draw (0,5) node[above] {$s_5$};
\draw (2.5,4.25) node[right] {$s_6$};
\draw (4.75,2) node[right] {$s_7$};
\draw (3.75,-.25) node[below] {$s_8$};
\end{tikzpicture}
\caption{Constructing a $k$-gon, given inequalities on side lengths.}
\label{fig:gluing together polygons}
\end{figure}
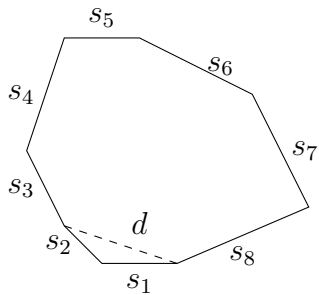
\end{proof}

There are some interesting things to note about the result and proof of Proposition~\ref{prop:k-ilateral inequality}. First, a given multiset $S$ need not produce a unique polygon. For example, the multiset $\{x,x,x,x\}$ describes the side lengths of infinitely many rhombi. Also, the construction in the proof of Proposition~\ref{prop:k-ilateral inequality} can be used to produce convex $k$-gons.

\section{Broken bricks}

Having characterized those multisets of positive numbers that can describe polygons, we now give our solution to the discrete analogue of the broken stick problem: the broken brick problem.

\begin{theorem}\label{thm:discrete breaking}
Let $n \ge k \ge 3$ be positive integers, and consider a stick of length $n$. Pick $k-1$ distinct interior integer points on the stick, independently and at random, and cut the stick at these $k-1$ points. The probability that the resulting $k$ pieces form a $k$-gon is
$$1 - \frac{k\binom{\lfloor n/2 \rfloor}{k-1}}{\binom{n-1}{k-1}}.$$
\end{theorem}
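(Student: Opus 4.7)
The plan is to mirror the geometric argument behind Theorem~\ref{thm:splintered breaking} in a discrete setting, replacing continuous volumes with counts of integer compositions. A choice of $k-1$ distinct interior integer cut points on a stick of length $n$ is the same data as a $(k-1)$-element subset of $\{1,2,\ldots,n-1\}$, which in turn corresponds bijectively to a composition $(x_1,x_2,\ldots,x_k)$ of $n$ into $k$ positive integer parts. So the sample space consists of $\binom{n-1}{k-1}$ equally likely outcomes, and it suffices to count the ``bad'' compositions, i.e.\ those that fail to form a $k$-gon.

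By Proposition~\ref{prop:k-ilateral inequality}, a composition $(x_1,\ldots,x_k)$ is bad if and only if some part satisfies $x_i \geq n/2$, which, since $x_i$ is a positive integer, is equivalent to $x_i \geq \lceil n/2 \rceil$. I would then verify that at most one index can be bad: if $x_i, x_j \geq \lceil n/2 \rceil$ with $i \neq j$, then $x_i + x_j \geq 2\lceil n/2 \rceil \geq n$, contradicting the positivity of the remaining $k-2 \geq 1$ parts. Hence the bad event partitions into $k$ disjoint sub-events indexed by the position of the oversized coordinate, directly paralleling the ``contraction toward a corner'' step in the continuous proof.

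It remains to count, say, compositions with $x_1 \geq \lceil n/2 \rceil$. The substitution $y_1 = x_1 - \lceil n/2 \rceil + 1$ turns these into compositions $(y_1, x_2, \ldots, x_k)$ of $n - \lceil n/2 \rceil + 1 = \lfloor n/2 \rfloor + 1$ into $k$ positive parts, of which there are $\binom{\lfloor n/2 \rfloor}{k-1}$ by the standard stars-and-bars count. Multiplying by $k$ (from the disjointness in the previous paragraph) and dividing by the sample-space size $\binom{n-1}{k-1}$ yields the claimed probability $1 - k\binom{\lfloor n/2 \rfloor}{k-1}/\binom{n-1}{k-1}$. I do not foresee any serious obstacle: the only care required is in the floor/ceiling bookkeeping (in particular the identity $n - \lceil n/2 \rceil + 1 = \lfloor n/2 \rfloor + 1$, which handles the parities of $n$ uniformly) and in checking that the disjointness argument survives when $n$ is odd, both of which are immediate.
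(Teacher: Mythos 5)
Your proposal is correct and follows essentially the same route as the paper's own proof: identify the sample space with the $\binom{n-1}{k-1}$ compositions of $n$ into $k$ positive parts, observe via Proposition~\ref{prop:k-ilateral inequality} that the bad outcomes split into $k$ disjoint classes according to which single part is at least $n/2$, and count each class as $\binom{\lfloor n/2\rfloor}{k-1}$ by the shift $x_1 \mapsto x_1 - \lceil n/2\rceil + 1$. Your explicit verification that two parts cannot both be oversized is a slightly more detailed version of the paper's one-line disjointness remark, but the argument is the same.
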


\begin{proof}
Fix values of $n$ and $k$ with $n\geq k \geq 3$. The sample space for the broken brick problem is the set of integer points
\[
 S = \{ (x_1,\ldots,x_k) : x_i \in \mathbb{Z},\ x_i \ge 1, \mbox{ and } \sum x_i = n \},
\]
with the same interpretation as before. We will call each element of $S$, which represents a collection of the segments obtained from the full stick, an \emph{inventory}.

In combinatorics, vectors satisfying the rules of membership in $S$ are called \emph{compositions} of $n$ into $k$ \emph{parts}. The number of compositions of $n$ into $k$ parts is $\binom{n-1}{k-1}$, as shown by the following argument. Each $x_i$ is positive, so
\[
 1\leq x_1 < x_1+x_2 < \cdots < x_1 + x_2 +\cdots + x_{k-1} < x_1+x_2 + \cdots + x_k =n.
\]
By setting $a_1 := x_1$, $a_2:=x_1+x_2$, and so on up to $a_{k-1}$, the composition $(x_1,\ldots,x_k)$ determines (and is uniquely determined by) a set of integers $(a_1,\ldots,a_{k-1})$ satisfying
\[
 1\leq a_1 < a_2 < \cdots < a_{k-1} < n.
\]
Because the $a_i$ are $k-1$ increasing integers between $1$ and $n-1$, the number of such vectors is $\binom{n-1}{k-1}$.

Having established that $|S|=\binom{n-1}{k-1}$, we wish to show that the set of inventories that do \emph{not} form a $k$-gon has cardinality $k\binom{\lfloor n/2 \rfloor}{k-1}$. To this end, we define sets containing the ``bad'' points in $S$; that is, the inventories that do not describe $k$-gons. For $1 \le i \le k$, let
\[
S_i := \{ (x_1,\ldots,x_k) \in S : x_i \geq n/2 \},
\]
which is the set of all inventories that violate the $k$-gon inequality because the segment length $x_i$ is longer than the sum of the other lengths. There can be at most one such segement in any inventory, so the sets $S_1, S_2, \ldots, S_k$ are pairwise disjoint. By Proposition \ref{prop:k-ilateral inequality}, any bad point is contained in some $S_i$, and by symmetry they each have the same cardinality. Thus, the number of bad inventories is:
\[
 |S_1 \cup S_2 \cup \cdots \cup S_k| = \sum_{i=1}^k |S_i| = k|S_1|. 
\]

It remains to prove that $|S_1| = \binom{\lfloor n/2 \rfloor}{k-1}$, which, again, comes down to counting compositions! Consider a point $(x_1,\ldots,x_k) \in S_1$. Thus $x_1+\cdots + x_k = n$ and $x_1 \geq n/2$. Define 
\[
 x'_1 := x_1 - \lceil n/2 \rceil +1 \geq 1.
\]
Then the vector $(x'_1, x_2, \ldots, x_k)$ has 
\[
 x_1' + x_2 +\cdots + x_k = n- \lceil n/2 \rceil +1 = \lfloor n/2 \rfloor + 1.
\]
Thus, inventories in $S_1$ are in bijection with compositions $(x'_1, x_2, \ldots, x_k)$ of $\lfloor n/2 \rfloor + 1$ into $k$ parts. There are $\binom{ \lfloor n/2 \rfloor }{k-1}$ such compositions, which completes the proof.
\end{proof}

We can observe that as $n \rightarrow \infty$, the probability computed in Theorem~\ref{thm:discrete breaking} in the discrete setting approaches the probability computed in Theorem~\ref{thm:splintered breaking}. We do this by noting that for large $m$ and fixed $j$, we can approximate $\binom{m}{j}$ by the polynomial $\frac{1}{j!} m^j$:
\[
 \binom{m}{j} = \frac{1}{j!}m(m-1)\cdots (m-j+1) = \frac{1}{j!}\left( m^j + (\mbox{smaller powers of $m$})\right).
\]
Thus, for fixed $k$,
\[
 \lim_{n\to \infty} \frac{ \binom{\lfloor n/2 \rfloor}{k-1} }{\binom{n-1}{k-1}} = \lim_{n\to \infty} \frac{ \frac{1}{(k-1)!}\left( (n/2)^{k-1} + (\mbox{smaller powers of $n/2$})\right) }{\frac{1}{(k-1)!}\left( (n-1)^{k-1} + (\mbox{smaller powers of $n-1$})\right)} = \frac{1}{2^{k-1}},
\]
and hence
\[
 1- k\cdot \frac{ \binom{\lfloor n/2 \rfloor}{k-1} }{\binom{n-1}{k-1}} \to 1 - \frac{k}{2^{k-1}},
\]
which recovers Theorem \ref{thm:splintered breaking}.

This answer to the general broken stick problem tells us that the probability of forming a $k$-gon increases to 1 rapidly as $k$ increases. This should match our intution: a stick broken into a billion pieces that form a polygon should be akin to forming a circle from a pile of dust.

\section{Pick-up sticks}

The first author found D'Andrea and G\'omez's result so delightful that, for a period of time, he told it to anyone who would listen. To grab a listener's attention, he would focus on the case $k=4$, where the probability is $1-4/8 = 1/2$. Interestingly, most people found this probability to be extremely counter-intuitive, guessing that the probability would be much higher than $1/2$.

When challenged, the first author wrote a computer simulation to convince a skeptic. Disturbingly, the simulation recorded a success rate of $83\%$! As it turned out, rather than coding the ``broken brick problem,'' he had instead implemented a simulation of the following problem, stated here in both continuous and discrete versions.

\begin{quote}
\textbf{The pick-up sticks problem.} Let $k\ge 3$ be a positive integer. Select $k$ sticks of lengths chosen from a uniform distribution of stick lengths. What is the probability that the resulting $k$ sticks form a $k$-gon?
\end{quote}

\begin{quote}
\textbf{The pick-up bricks problem.} Let $n\ge 1$, $k \ge 3$ be positive integers. Select $k$ sticks, each of which has length chosen from the uniform distribution on $\{1,2,\ldots,n\}$. What is the probability that the resulting $k$ sticks form a $k$-gon?
\end{quote}

What the first author had naively assumed was that 
\begin{quote}
``pick up four random sticks'' 
\end{quote}
was basically the same as 
\begin{quote}
``break a random stick into four random pieces,''
\end{quote}
but they are not the same at all! Indeed, the solution to the pick-up bricks problem is as follows.

\begin{theorem}\label{thm:pickup}
Let $n\ge 1$, $k \ge 3$ be positive integers. Pick $k$ distinct sticks, each of which has length chosen from the uniform distribution on $\{1,2,\ldots,n\}$. The probability that the resulting $k$ sticks form a $k$-gon is
$$1 - \frac{k\binom{n+1}{k}}{n^k}.$$
\end{theorem}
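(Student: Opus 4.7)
The plan is to follow the template of Theorem~\ref{thm:discrete breaking}, but on a new sample space. An outcome of the pick-up bricks experiment is an ordered $k$-tuple $(x_1,\ldots,x_k) \in \{1,2,\ldots,n\}^k$ drawn independently and uniformly, so the sample space has cardinality $n^k$. By Proposition~\ref{prop:k-ilateral inequality}, such a tuple fails to describe a $k$-gon if and only if some coordinate $x_i$ satisfies $x_i \geq \sum_{j \neq i} x_j$. The problem reduces to counting the ``bad'' tuples and dividing by $n^k$.

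For each $i \in \{1,\ldots,k\}$, let $S_i := \{(x_1,\ldots,x_k) \in \{1,\ldots,n\}^k : x_i \geq \sum_{j \neq i} x_j\}$. The first step is to verify that for $k \geq 3$ these sets are pairwise disjoint: if both $x_i$ and $x_{i'}$ exceeded the sum of the remaining coordinates for distinct $i,i'$, summing the two inequalities would give $\sum_{j \notin \{i,i'\}} x_j \leq 0$, which is impossible because $k \geq 3$ forces at least one such term with $x_j \geq 1$. By the obvious symmetry of the $S_i$, this reduces the bad count to $k|S_1|$.

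The heart of the argument is the claim $|S_1| = \binom{n+1}{k}$. The key observation is that within $S_1$, the constraints $x_1 \leq n$ and $x_j \geq 1$ automatically imply $x_j \leq n$ for $j \geq 2$, so the effective conditions are just $x_1 \geq x_2 + \cdots + x_k$, $x_1 \leq n$, and $x_j \geq 1$ for all $j$. I would then introduce two nonnegative slack variables $s := n - x_1$ and $w := x_1 - (x_2 + \cdots + x_k)$, yielding the identity $s + w + x_2 + \cdots + x_k = n$. After the shifts $s \mapsto s+1$ and $w \mapsto w+1$, the encoded tuple becomes a composition of $n+2$ into the $k+1$ positive parts $(s+1,\, w+1,\, x_2,\, \ldots,\, x_k)$; the map is visibly a bijection, and standard counting gives $\binom{(n+2)-1}{(k+1)-1} = \binom{n+1}{k}$ such compositions. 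Combining everything produces the stated probability.

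The step requiring the most care is choosing the right slack-variable encoding: one needs exactly two slack coordinates (one for $x_1 \leq n$, one for $x_1 \geq \sum_{j \geq 2} x_j$) to turn the inequalities into a single equation with the right total and number of parts. The other ingredients (Proposition~\ref{prop:k-ilateral inequality}, disjointness, symmetry, and counting compositions) are routine once the bijection is in place.
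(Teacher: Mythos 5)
Your proposal is correct and follows essentially the same architecture as the paper's proof: the same sample space of size $n^k$, the same pairwise-disjoint bad sets $S_i$, the same symmetry reduction to $k|S_1|$, and the same target count $|S_1| = \binom{n+1}{k}$. The only divergence is the final bijection: the paper passes to the partial sums $y_1 = x_1+1$, $y_j = x_j + \cdots + x_k$ to get strictly decreasing sequences, i.e.\ $k$-subsets of $\{1,\ldots,n+1\}$, whereas you introduce two slack variables to get compositions of $n+2$ into $k+1$ positive parts; both are standard and both yield $\binom{n+1}{k}$.
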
 

\begin{proof}
The sample space for the pick-up bricks problem is
\[
[1,n]^k = \{ (x_1,\ldots,x_k) : x_i \in \mathbb{Z} \mbox{ and } 1 \leq x_i \leq n \mbox{ for all } i \},
\]
which clearly has cardinality $n^k$. As in the proof of Theorem~\ref{thm:discrete breaking}, we will count the points in the sample space that do \emph{not} form a $k$-gon, showing that there are $k\binom{n+1}{k}$ such ``bad'' points. 

As in the earlier proof, denote the (disjoint) sets of bad points by
\[
S_i := \{ (x_1,\ldots,x_k) \in S : x_i \geq (x_1+\cdots+x_k)-x_i \}.
\]
Again, the total number of bad inventories is:
\[
 |S_1 \cup S_2 \cup \cdots \cup S_k| = \sum_{i=1}^k |S_i| = k|S_1|,
\]
and it remains to prove that $|S_1| = \binom{n+1}{k}$.

Let $(x_1,\ldots,x_k) \in S_1$, meaning that $n \geq x_1 \geq x_2 + \cdots +x_k$. To count such points, we make the following change of variables:
\begin{align*}
 y_1 &:= x_1 +1, \\
 y_2 &:= x_2 + x_3 + x_4 + \cdots + x_k,\\ 
 y_3 &:= x_3 + x_4 + \cdots + x_k,\\
  &\vdots\\
 y_k &= x_k.
\end{align*}
That is, $y_1= x_1+1$ and $y_j = \sum_{i=j}^k x_i$ for $j\geq 2$. Since each $x_i$ is positive, this gives a bijection between the points $(x_1,\ldots,x_k) \in S_1$ and the set of integer points $(y_1,\ldots,y_k)$ satisfying:
\[
n+1\geq y_1 > y_2 > \cdots > y_k \geq 1.
\]
That is, the $y_i$ are simply $k$ distinct numbers between $1$ and $n+1$, and thus there are $\binom{n+1}{k}$ such sets $\{y_1,\ldots, y_k\}$. This shows that $|S_1| = \binom{n+1}{k}$, and the theorem follows.
\end{proof}

We can obtain the solution to the continuous pick-up sticks problem by considering the limit as $n\to \infty$ of the previous result. Using the same estimate for binomial coefficients as before, we have
\[
 \lim_{n\to \infty} \frac{k\binom{n+1}{k}}{n^k} = \frac{k}{k!} \cdot \lim_{n\to \infty} \frac{(n+1)^k + (\mbox{smaller powers of $n+1$})}{n^k} = \frac{1}{(k-1)!},
\]
which gives us the following result.

\begin{corollary}\label{cor:pickup}
Let $k\ge 3$ be a positive integer. Select $k$ sticks of lengths chosen from a uniform distribution of stick lengths. The probability that the resulting $k$ sticks form a $k$-gon is
\[
 1-\frac{1}{(k-1)!}.
\]
\end{corollary}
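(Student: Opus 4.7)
The plan is to prove Corollary~\ref{cor:pickup} by a direct geometric probability computation, paralleling the proof of Theorem~\ref{thm:splintered breaking}. Since ``uniform distribution of stick lengths'' is not well-defined on $(0,\infty)$, I would first invoke the scale invariance of Proposition~\ref{prop:k-ilateral inequality} (if $(x_1,\ldots,x_k)$ are the side lengths of a $k$-gon, then so are $(cx_1,\ldots,cx_k)$ for any $c > 0$) to reduce to the case where each $x_i$ is drawn independently and uniformly from $[0,1]$. The sample space is then the unit cube $[0,1]^k$, equipped with Lebesgue measure.

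Next, mirroring the proof of Theorem~\ref{thm:pickup}, I would use Proposition~\ref{prop:k-ilateral inequality} to express the ``bad'' region as $\bigcup_{i=1}^k B_i$, where $B_i := \{(x_1,\ldots,x_k) \in [0,1]^k : x_i \ge \sum_{j \ne i} x_j\}$. A quick check shows that two such conditions can hold simultaneously only on a measure-zero set, so the $B_i$ are essentially disjoint and, by symmetry, $\mathrm{vol}(\bigcup_i B_i) = k\,\mathrm{vol}(B_1)$. To compute $\mathrm{vol}(B_1)$, I would integrate slicewise: for fixed $x_1 \in [0,1]$, the cross-section $\{(x_2,\ldots,x_k) \in [0,1]^{k-1} : x_2 + \cdots + x_k \le x_1\}$ is a standard $(k-1)$-simplex (the box constraints $x_j \le 1$ being vacuous, since the sum bound forces $x_j \le x_1 \le 1$) and hence has $(k-1)$-volume $x_1^{k-1}/(k-1)!$. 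Integrating yields $\mathrm{vol}(B_1) = \int_0^1 x_1^{k-1}/(k-1)!\,dx_1 = 1/k!$, so the total bad volume is $k/k! = 1/(k-1)!$, giving the stated probability $1 - 1/(k-1)!$.

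No step should present a serious obstacle; the only real subtlety is clarifying the informal ``uniform distribution of stick lengths'' via scale invariance. This geometric argument pleasingly parallels the one given for Theorem~\ref{thm:splintered breaking} and provides an independent check on the limit $k\binom{n+1}{k}/n^k \to 1/(k-1)!$ computed immediately before the corollary, which (together with a weak-convergence argument comparing the scaled discrete uniform to the continuous uniform) would furnish an alternative proof directly from Theorem~\ref{thm:pickup}.
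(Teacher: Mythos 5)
Your argument is correct, but it is not the route the paper takes to prove the corollary. The paper derives Corollary~\ref{cor:pickup} as the $n\to\infty$ limit of the discrete answer in Theorem~\ref{thm:pickup}, computing $\lim_{n\to\infty} k\binom{n+1}{k}/n^k = 1/(k-1)!$ via the polynomial approximation $\binom{m}{j}\approx m^j/j!$; this sidesteps the issue of what ``uniform distribution of stick lengths'' means in the continuum by, in effect, defining the continuous answer as the limit of the discrete one. What you give instead is a direct continuous computation on the cube $[0,1]^k$ --- which the paper does mention afterward as an alternative, but only as a sketch with the key claim ``the volume of each bad corner is $1/k!$ of the cube'' left unproved. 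Your slicewise integration $\int_0^1 x_1^{k-1}/(k-1)!\,dx_1 = 1/k!$ supplies exactly that missing detail, and your observations that the box constraints are vacuous on each slice and that the regions $B_i$ overlap only on a measure-zero set are the right ones. In short: your proof is self-contained in the continuous setting and makes the paper's geometric aside rigorous, at the cost of having to formalize the uniform-distribution hypothesis via scale invariance (or weak convergence of the scaled discrete uniform, as you note); the paper's proof inherits rigor from the already-proved discrete count and requires only an elementary limit, but gives less geometric insight into why the answer is $1-1/(k-1)!$.
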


In light of this result, the $83\%$ coming from the computer simulation of the pick-up sticks problem with $k=4$ was approximating $1-1/3!=5/6$. (Whew!)

Given the elegance of the answer appearing in Corollary \ref{cor:pickup}, the reader may wonder if there is a geometric proof of the pick-up sticks problem, and indeed there is. Since each stick length is chosen independently, the sample space is now a cube. The volume of a region in the cube for which one of the sticks violates the $k$-gon inequality, $x_i \geq \total{S} - x_i$, can be shown to be $1/k!$ of the volume of the whole cube. As there are $k$ such regions and these regions are disjoint, the complementary volume is 
\[
 1-k\cdot\frac{1}{k!} = 1-\frac{1}{(k-1)!}.
\]
See Figure \ref{fig:cube} for an illustration of the case $k=3$.

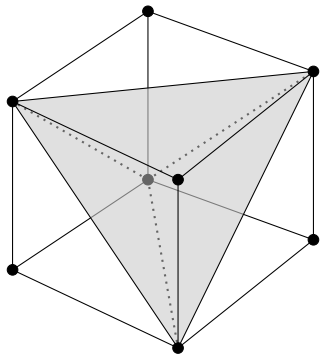
\begin{figure}
 \[
 \begin{tikzpicture}
   \coordinate (o) at (0-.2,0);
   \coordinate (a) at (-2,-1-.2);
   \coordinate (b) at (2,-1+.2);
   \coordinate (f) at (0+.2,-2.24);
   \coordinate (c) at (0-.2,2.24);
   \coordinate (d) at (2,-1+2.24+.2);
   \coordinate (e) at (-2,-1+2.24-.2);
   \coordinate (g) at (0+.2,0);
   \draw (o)--(a)--(e)--(c)--(o)--(b)--(d)--(c);
   \draw (f)--(b);
   \draw[draw=none,fill=black!20,opacity=.2] (o)--(e)--(f)--(o);
   \draw[draw=none,fill=black!20,opacity=.2] (o)--(e)--(d)--(o);
   \draw[draw=none,fill=black!20,opacity=.2] (o)--(f)--(d)--(o);
   \draw[thick,dotted] (o)--(f);
   \draw[thick,dotted] (o)--(e);
   \draw[thick,dotted] (o)--(d);
   \draw[fill=black] (o) circle (2pt);
   \draw[draw=none,fill=black!20,opacity=.5] (f)--(g)--(d)--(f);
   \draw[draw=none,fill=black!20,opacity=.5] (f)--(g)--(e)--(f);
   \draw[draw=none,fill=black!20,opacity=.5] (g)--(e)--(d)--(g);
   \draw (f)--(d)--(e)--(f);
    \draw (g)--(d);
    \draw (g)--(e);
    \draw (g)--(f)--(a);
   \draw[fill=black] (a) circle (2pt);
   \draw[fill=black] (b) circle (2pt);
   \draw[fill=black] (c) circle (2pt);
   \draw[fill=black] (d) circle (2pt);
   \draw[fill=black] (e) circle (2pt);
   \draw[fill=black] (f) circle (2pt);
   \draw[fill=black] (g) circle (2pt);
 \end{tikzpicture}
\]
\caption{The geometric argument for the pick-up sticks problem with $k=3$ sticks. The three corners missing from the cube are each $1/6$ of the volume of the entire cube. Thus the shaded region's volume is $1/2$ of the volume of the whole cube.}\label{fig:cube}
\end{figure}

\section{Four-gon conclusions}

The mistake made when attempting to run a computer simulation of the broken brick problem was a happy accident because it led to another interesting, related question with a satisfying answer. The authors were then motivated to consider a whole host of related questions about combinations of picking up and breaking sticks. 

Consider five different ``four-gon'' problems, each of which can be described in continuous and discrete settings.
\begin{itemize}
\item \textsf{Stick(4)}: one stick breaks into four pieces
\item \textsf{Stick(3,1)}: two sticks of random lengths, one of which breaks into three pieces
\item \textsf{Stick(2,2)}: two sticks of random lengths, each of which breaks into two pieces
\item \textsf{Stick(2,1,1)}: three sticks of random lengths, one of which breaks into two pieces
\item \textsf{Stick(1,1,1,1)}: four sticks of random lengths
\end{itemize}

The classical broken stick/brick problem is the scenario described by \textsf{Stick(4)}. The pick-up sticks/bricks problem is \textsf{Stick(1,1,1,1)}. What about the other three? Computer experiments suggest probabilities of forming a four-gon are $\approx37\%$, $\approx50\%$, and $\approx61\%$ in these scenarios, respectively. We invite interested readers to study these problems and find their own \emph{four-gon conclusions.}

As a broader line of future inquiry, we remark that our labeling of the problems here generalizes to any integer partition. That is, for any integer partition $\lambda_1 \geq \lambda_2 \geq \cdots \geq \lambda_m \geq 1$ with $\sum \lambda_i = k$, we have

\begin{quote}
\textbf{The broken pick-up sticks problem} \textsf{Stick}($\lambda$)\textbf{.} Consider a partition $\lambda=(\lambda_1,\ldots,\lambda_m)$ with $\lambda_1 \geq \lambda_2 \geq \cdots \geq \lambda_m \geq 1$ with $\sum \lambda_i = k$. Pick up $m$ sticks chosen from a uniform distribution of stick lengths. For each $i$, break the $i$th stick into $\lambda_i$ pieces by choosing $\lambda_i-1$ cut points independently at random. What is the probability that the resulting $k$ pieces form a $k$-gon?
\end{quote}

In this article we have given answers to the problems corresponding to the extreme partitions $(k)$ and $(1,1,\ldots,1)$. We encourage the reader to explain other interesting cases, and perhaps solve the problem in complete generality!


\begin{thebibliography}{99}

\bibitem{crowdmath} P.~A.~CrowdMath, The broken stick project, arXiv:1805.06512. Accessed March 18, 2019.

\bibitem{dandrea gomez} C.~D'Andrea and E.~G\'omez, The broken spaghetti noodle, \textit{The American Mathematical Monthly} \textbf{113} (2006), 555--557.

\bibitem{gardner} M.~Gardner, \textit{The Colossal Book of Mathematics}, W.~W.~Norton \& Company, New York, 2001.

\bibitem{ionascu pajitura} E.~J.~Iona\c{s}cu and G.~Pr\u{a}jitur\u{a}, Things to do with a broken stick, \textit{International Journal of Geometry} \textbf{2} (2013), 5--30.

\bibitem{uiuc report} L.~Kong, L.~Lkhamsuren, A.~Turney, A.~Uppal, and A.~J.~Hildebrand, Random points, broken sticks, and triangles project report, \url{https://faculty.math.illinois.edu/~hildebr/ugresearch/brokenstick-spring2013report.pdf}. Accessed March 18, 2019.

\bibitem{lemoine} I.~Lemoine, Sur une question de probabilit\'es, \textit{Bulletin de la Socie\'et\'e Math\'ematique de France} \textbf{1} (1875), 39--40.

\bibitem{poincare} H.~Poincar\'e, \textit{Calcul des Probabilit\'es}, Gauthier-Villars, Paris, 1912.

\end{thebibliography}
\end{document}